\newtheorem{theorem}{Theorem}[section]
\newtheorem{proposition}[theorem]{Proposition}
\newtheorem{definition}[theorem]{Definition}
\theoremstyle{definition}
\newtheorem{remark}[theorem]{Remark}
\newcommand{\beb}{\begin{block}}
\newcommand{\enb}{\end{block}}
\newcommand{\bigzero}{\mbox{\normalfont\Large 0}}
\renewcommand{\a}{\alpha}
\newcommand{\w}{\omega}
\renewcommand{\l}{\left}
\renewcommand{\r}{\right}
\newcommand{\pr}{\partial}
\newcommand{\dm}{{\rm Dom}\, }
\newcommand{\ds}{\displaystyle}
\newcommand{\rom}[1]{\expandafter{\romannumeral #1\relax}}
\newcommand{\RNum}[1]{\uppercase\expandafter{\romannumeral #1\relax}}
\title[New characterization of the Hardy space and of other Hilbert spaces]{A new characterization of the Hardy space and of other Hilbert spaces of analytic functions}
\author{Natanael Alpay}
\address{Schmid College of Science and Technology\\
Chapman University\\
One University Drive\\
Orange, California 92866\\
USA}
\email{nalpay@chapman.edu}
\begin{document}
		\begin{abstract}
The Fock space can be characterized (up to a positive multiplicative factor)
as the only Hilbert space of entire functions in which the adjoint of derivation is multiplication by the complex variable. Similarly (and still up to a positive multiplicative factor)
the Hardy space is the only space of functions analytic in the open unit disk
for which the adjoint of the backward shift operator is the multiplication operator. In the present paper we characterize the Hardy space and some related reproducing kernel Hilbert spaces
in terms of the adjoint of the differentiation operator. We use reproducing kernel methods, which seem to also give a new characterization of the Fock space.
	\end{abstract}
\maketitle
\tableofcontents

\noindent AMS Classification: 46E22, 47B32, 30H20, 30H10\\
\noindent Keywords: reproducing kernel, Hardy space, Fock space

\section{Introduction}
\setcounter{equation}{0}
The Fock (or Bargmann-Fock-Segal) space consists of the entire functions $f$ such that
\begin{equation}
  \label{gauss123}
\frac{1}{\pi}\iint_{\mathbb C}|f(z)|^2e^{-|z|^2}dxdy<\infty,
\end{equation}
and 
is the reproducing kernel Hilbert space with reproducing kernel
\begin{equation}
  \label{fock}
  e^{z\overline{\w}}.
\end{equation}
It is (up to a positive multiplicative factor) the unique Hilbert space of entire functions
in which 
\begin{equation}
\label{fock-equation}
  \pr_z^*=M_z,
\end{equation}
where $\pr_z$ denote the derivative with respect to $z$, 
and will be used throughout the work along with
the notation $(\pr_z f)(z)=f'(z)$. Furthermore, in \eqref{fock-equation} $M_z$ stands for multiplication by the variable $z$, e.g., $(M_z f)(z) = zf(z)$. We refer to the work of Bargmann
\cite{MR0157250,bargmann} for this result.
Formula \eqref{fock-equation} suggests to find similar characterizations for other important spaces of analytic functions. In particular, we have in mind the following spaces of functions analytic in
the open unit disk $\mathbb D$:\\

$(1)$ The Bergman space, which consists of the functions analytic in $\mathbb D$ and such that:
$$\ds\frac{1}{\pi}\iint_{\mathbb D}|f(z)|^2dxdy<\infty,$$
with reproducing kernel $\displaystyle \dfrac{1}{(1-z\overline{\w})^2} = \sum_{n=0}^{\infty} (n+1)z^n{\overline{\w}}^n$.\\

$(2)$ The Hardy space $\mathbf H^2$, when the condition is:
$$\lim_{r\rightarrow 1}\frac{1}{2\pi}\int_0^{2\pi}|f(re^{it})|^2dt<\infty,$$
with the reproducing kernel  $\displaystyle \dfrac{1}{1-z\overline{\w}} = \sum_{n=0}^{\infty} z^n\overline{\w}^n$.\\

$(3)$ The Dirichlet space, for which the functions vanish at the origin and satisfy
$$\frac{1}{\pi}\iint_{\mathbb D} |f^\prime(z)|^2dxdy<\infty,$$
with reproducing kernel $\displaystyle  -\ln (1-z\overline{\w}) = \sum_{n=1}^{\infty} \frac{z^n\overline{\w}^n}{n}$.\\

In the present work we approach this problem using reproducing kernel Hilbert spaces methods. We prove te following results.

\begin{theorem}
\label{thmhardy}
The Hardy space is, up to a positive multiplicative factor, the only reproducing kernel Hilbert space of functions analytic in $\mathbb D$, in which the equality
\begin{equation}
  \partial_z^* =M_z\partial_z M_z
  \label{hardy-equal}
\end{equation}
holds on the linear span of the kernel functions.  
\end{theorem}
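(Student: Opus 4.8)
The plan is to convert the operator identity \eqref{hardy-equal}, imposed on the linear span of kernel functions, into a partial differential equation for the reproducing kernel $K(z,w)$, and then to solve that equation by expanding $K$ in a double power series. Throughout I write $k_w=K(\cdot,w)$ for the kernel function at $w\in\dd$, so that $f(w)=\la f,k_w\ra$ for every $f$ in the space. I would begin by recording the two reproducing-kernel identities that the two sides of \eqref{hardy-equal} require. Since evaluation of the derivative at a point is a bounded functional on a reproducing kernel Hilbert space of analytic functions, $\partial_z^*$ is defined on each $k_w$, and a short computation (using $\overline{K(w,z)}=K(z,w)$) gives $\partial_z^*k_w=\partial_{\bar w}K(\cdot,w)$. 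On the other hand $M_z\partial_z M_z$ acts on $k_w$ by direct differentiation, yielding $(M_z\partial_z M_z k_w)(z)=zK(z,w)+z^2\,\partial_z K(z,w)$.

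Equating these two expressions, which is exactly what it means for \eqref{hardy-equal} to hold on the span of the kernels, produces the identity
\begin{equation}
\partial_{\bar w}K(z,w)=zK(z,w)+z^2\,\partial_z K(z,w),
\label{pde-kernel}
\end{equation}
valid for all $z,w\in\dd$. The next step is to insert the double power series $K(z,w)=\sum_{m,n\ge 0}c_{m,n}z^m\bar w^n$, which exists because $K$ is analytic in $z$ and conjugate-analytic in $w$; here $(c_{m,n})$ is a Hermitian, positive semidefinite array with $c_{m,n}=\overline{c_{n,m}}$. Matching the coefficient of $z^m\bar w^n$ on the two sides of \eqref{pde-kernel} turns the PDE into the recursion
\begin{equation}
(n+1)\,c_{m,n+1}=m\,c_{m-1,n},\qquad m,n\ge 0,
\label{recursion}
\end{equation}
with the convention $c_{-1,n}=0$.

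It then remains to solve \eqref{recursion}. Taking $m=0$ forces $c_{0,n}=0$ for every $n\ge 1$, and Hermitian symmetry gives $c_{n,0}=0$ for $n\ge 1$. Reading \eqref{recursion} along a diagonal $m-n=d$ shows that each such diagonal is governed by a first-order recursion whose only input is its initial entry; for $d\neq 0$ that entry lies on the first row or column and hence vanishes, while for $d=0$ the recursion reads $c_{n+1,n+1}=c_{n,n}$, so the diagonal is constant and equal to $c_{0,0}$. Consequently $c_{m,n}=c_{0,0}\,\delta_{m,n}$, and positivity of the kernel forces $c_{0,0}>0$; thus $K(z,w)=c_{0,0}\,(1-z\bar w)^{-1}$ is the Szeg\H{o} kernel up to a positive factor, which identifies the space as the Hardy space up to a positive multiplicative factor.

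The routine combinatorics of \eqref{recursion} is not where the difficulty lies. The main obstacle is the first, conceptual step: making precise the sense in which \eqref{hardy-equal} "makes sense on the linear span of the kernel," i.e. checking that each $k_w$ lies in $\dm \partial_z^*$ and that the adjoint, multiplication, and differentiation operators may legitimately be composed and evaluated on kernels, so that the passage to \eqref{pde-kernel} is valid rather than merely formal. Once the reduction to \eqref{pde-kernel} is justified, everything else is forced.
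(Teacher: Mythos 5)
Your proposal is correct and follows essentially the same route as the paper's proof: apply \eqref{hardy-equal} to the kernel functions to obtain the first-order PDE $\partial_{\bar w}K(z,w)=zK(z,w)+z^2\partial_zK(z,w)$ (the conjugate-transposed form of the paper's equation), expand $K$ in a double power series, and solve the resulting coefficient recursion to force $c_{m,n}=c_{0,0}\,\delta_{m,n}$, i.e. $K(z,w)=c_{0,0}(1-z\bar w)^{-1}$ with $c_{0,0}>0$. The only minor difference is that the paper also verifies explicitly that the Szeg\H{o} kernel satisfies \eqref{hardy-equal}, whereas in your write-up this existence half is left implicit in the reversibility of the reduction to the PDE.
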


Note that both in this, and in the next theorem, one could assume that the functions are analytic only in a neighborhood of the origin, and then use analytic continuation. We also note that
the unbounded operator $M_z\partial_z$ is diagonal, and acts on the polynomials as the number operator of quantum mechanics:
\[
M_z\partial_z(z^n)=nz^n,\quad n=0,1,\ldots,
\]
see e.g. \cite[p. 548]{fayngold2013quantum} which s the radial derivative for mathematics.\\

As mentioned above, the Hardy space of the open unit disk $\mathbb D$ has reproducing kernel $\frac{1}{1-z\overline{\w}}$. More generally, for every $\alpha\in(0,\infty)$, the function
$\frac{1}{(1-z\overline{\w})^\alpha}$ is positive definite in $\mathbb D$, as can be seen from the power series expansion of the function $\frac{1}{(1-t)^\alpha}$ with center at the origin as
\begin{equation}
	\label{eq:alpha_expension}
	\frac{1}{(1-z\overline{\w})^\alpha}=1+\sum_{n=1}^\infty\frac{\alpha(\alpha+1)\cdots (\alpha+n-1)}{n!}z^n\overline{\w}^n,\quad z,\w\in\mathbb D.
\end{equation}

We will
use a similar notation to Bargmann (see \cite[Remark 2g, page 203]{MR0157250}), and 
denote $\mathfrak H_\alpha$ to be the associated reproducing kernel Hilbert space, characterized by the following result.

\begin{theorem}
\label{general}
Let $\alpha>0$. Then the space $\mathfrak H_\alpha$ is, up to a multiplicative factor, the only reproducing kernel Hilbert space of functions analytic in $\mathbb D$,
in which the equality
	\begin{equation}
	\partial_z^* =M_z\partial_z M_z - (1-\alpha)M_z, \quad \alpha>0, 
	\label{general-eq}
	\end{equation}  
holds on the linear span of the kernel functions.  
\end{theorem}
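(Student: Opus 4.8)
The plan is to convert the operator identity \eqref{general-eq} into a statement about the reproducing kernel $K(z,w)$ and then read it off coefficient by coefficient. Saying that $\partial_z^* = A$, with $A := M_z\partial_z M_z - (1-\alpha)M_z$, makes sense on $\mathrm{span}\{K_w\}$ means $K_w\in\dm\partial_z^*$ and $\partial_z^* K_w = AK_w$ for every $w\in\mathbb D$; pairing with $K_z$ and using $\langle K_z,\partial_z^*K_w\rangle = \langle \partial_z K_z,K_w\rangle$, this is equivalent to
\[
\langle \partial_z K_z,K_w\rangle = \langle K_z,AK_w\rangle\qquad(z,w\in\mathbb D).
\]
Since the elements of the space are analytic, I would expand $K(z,w)=\sum_{n,m\ge 0}c_{n,m}z^n\overline{w}^m$ with $c_{n,m}=\overline{c_{m,n}}$ (Hermitian symmetry) and aim to show that the hypothesis forces $c_{n,m}=0$ for $n\ne m$ and pins down the diagonal.

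First I would compute the two sides by the reproducing property. A direct calculation gives $(AK_w)(z)=\alpha\,zK(z,w)+z^2\partial_z K(z,w)$, so $\langle K_z,AK_w\rangle=\overline{(AK_w)(z)}$, while the left side equals $(\partial_z K_z)(w)$, the derivative in the first slot of $K$. Substituting the double series and matching the coefficient of $w^p\overline{z}^q$ yields, after using $\overline{c_{q-1,p}}=c_{p,q-1}$,
\[
(p+1)\,c_{p+1,q}=(q+\alpha-1)\,c_{p,q-1}.
\]
The Hermitian symmetry is what collapses the two terms on the right into a single one; this simplification is the small miracle that makes the recursion tractable.

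The heart of the argument---and the step I expect to be the main obstacle---is to deduce from this recursion that $K$ is \emph{radial}, i.e. a function of $z\overline{w}$ alone. Here the boundary of the index range does the work: taking $q=0$ the right-hand side is empty, so $c_{n,0}=0$ for all $n\ge 1$, and by Hermitian symmetry $c_{0,n}=0$ as well. The recursion relates $c_{p+1,q}$ to $c_{p,q-1}$, i.e. it moves \emph{along} a fixed diagonal $n-m=d$; seeding the diagonal $d\ge 1$ at its first entry $c_{d,0}=0$ and inducting upward forces every $c_{n,m}$ with $n-m=d$ to vanish, and symmetry handles $d\le -1$. Thus $K(z,w)=\sum_{n\ge 0}a_n(z\overline{w})^n$ with $a_n:=c_{n,n}\ge 0$.

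Finally, on the main diagonal the same recursion reads $n\,a_n=(n+\alpha-1)\,a_{n-1}$, whence $a_n=a_0\,(\alpha)_n/n!$ with $(\alpha)_n$ the Pochhammer symbol. Summing against the expansion of $(1-t)^{-\alpha}$ recalled before the statement gives $K(z,w)=a_0\,(1-z\overline{w})^{-\alpha}$, which is the kernel of $\mathfrak H_\alpha$ up to the positive factor $a_0=K(0,0)>0$ (if $a_0=0$ the space is trivial). This identifies the space as $\mathfrak H_\alpha$ up to a multiplicative constant, as claimed; setting $\alpha=1$ recovers Theorem~\ref{thmhardy} and the Hardy kernel $(1-z\overline{w})^{-1}$, a useful consistency check. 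A quicker but less rigorous route is to posit $K(z,w)=\phi(z\overline{w})$ from the start, in which case the same computation reduces \eqref{general-eq} to the separable ODE $\phi'(s)(1-s)=\alpha\,\phi(s)$ with solution $\phi(s)=a_0(1-s)^{-\alpha}$; the coefficient argument above is precisely what justifies the radial ansatz in the first place.
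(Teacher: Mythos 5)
Your proposal is correct and follows the same overall strategy as the paper: apply the operator identity to the kernels, expand $K(z,w)=\sum_{n,m}c_{n,m}z^n\overline{w}^m$, derive a coefficient recursion, kill the off-diagonal coefficients starting from the seed $c_{n,0}=0$, and solve the diagonal recursion. There is, however, one substantive difference, and it is in your favor: your computation is consistent with the sign in the statement, while the paper's own converse argument is not. The paper rewrites \eqref{general-eq} as $\partial^*f=z\partial(zf)-(\alpha-1)zf=z^2f'+(2-\alpha)zf$, i.e.\ it silently replaces $-(1-\alpha)M_z$ by $-(\alpha-1)M_z$, and therefore arrives at the recursion $(n+1)c_{n+1,m}=(m+1-\alpha)c_{n,m-1}$, whose diagonal case $(n+1)c_{n+1,n+1}=(n+2-\alpha)c_{n,n}$ produces $c_{n,n}=\frac{(2-\alpha)(3-\alpha)\cdots(n+1-\alpha)}{n!}\,c_{0,0}$, i.e.\ the kernel of $\mathfrak H_{2-\alpha}$ rather than of $\mathfrak H_\alpha$ (for $\alpha=2$ it would even force $c_{n,n}=0$ for all $n\ge 1$, giving a constant kernel instead of the Bergman kernel). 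With the statement's sign one has, as you computed, $\partial^*f=z^2f'+\alpha zf$, and your recursion $(p+1)c_{p+1,q}=(q+\alpha-1)c_{p,q-1}$ yields the correct diagonal $a_n=a_0\,\alpha(\alpha+1)\cdots(\alpha+n-1)/n!$ and $K(z,w)=a_0(1-z\overline{w})^{-\alpha}$; this agrees with the paper's own verification, at the start of its Section 4, that $(1-z\overline{w})^{-\alpha}$ solves \eqref{general-eq} with the factor $-(1-\alpha)M_z$. Two smaller points also favor your write-up: the induction along each diagonal $n-m=d$ from the seed $c_{d,0}=0$, with Hermitian symmetry handling $d<0$, is cleaner than the paper's appeal to a nonvanishing factor $\phi_{\alpha,n,m}$ (nonvanishing is not needed for the implication actually used, and with the paper's sign that factor can in fact vanish, e.g.\ when $\alpha=m+1$); and you note explicitly that $a_0=K(0,0)>0$ unless the space is trivial, which is what makes the conclusion ``up to a \emph{positive} multiplicative factor'' precise.
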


The case $\alpha=1$ corresponds to the Hardy space and Theorem \ref{thmhardy}, and $\alpha=2$ corresponds to the Bergman space.
The case $\alpha=0$ would ``correspond'' to the Dirichlet space, in the sense that
\[
\lim_{\alpha\rightarrow 0}\frac{1}{\alpha}\left(\frac{1}{(1-z\overline{\w})^\alpha}-1\right)=-\ln(1-z\overline{\w}).
\]
Note that $\partial_z$ is not densely defined in the Dirichlet space (since $\pr_z k_\w$ is not in the Dirichlet space for $\w\not=0$), and therefore its adjoint is a relation and not an operator.
We were not able to get a counterpart of Theorem \ref{general} for $\alpha=0$, but we have the following result.

\begin{theorem}
\label{thmdir}
The Dirichlet space is, up to a positive multiplicative factor, the only reproducing kernel Hilbert space of functions analytic in $\mathbb D$, for which the equality
\begin{equation}
\pr_{z^2}^2k=\bar{\w}^2\pr_{z}\pr_{\bar\omega}k
  \label{dirich-equal}
\end{equation}  
holds for its kernel, pointwise for $z,\w\in\mathbb D$.
\end{theorem}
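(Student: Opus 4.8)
The plan is to proceed just as one expects the companion Theorems \ref{thmhardy} and \ref{general} to be handled: translate the condition \eqref{dirich-equal} into a recursion on the Taylor coefficients of the kernel, solve the recursion, and read off that the kernel is forced to be logarithmic. First I would record that the equation is consistent with the claimed answer by a direct computation: writing $k(z,\w)=-\ln(1-z\overline{\w})$ one gets $\pr_z k=\overline{\w}/(1-z\overline{\w})$, hence $\pr_z^2 k=\overline{\w}^2/(1-z\overline{\w})^2$, while $\pr_{\overline{\w}}k=z/(1-z\overline{\w})$ gives $\pr_z\pr_{\overline{\w}}k=1/(1-z\overline{\w})^2$. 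Thus $\pr_z^2 k=\overline{\w}^2\,\pr_z\pr_{\overline{\w}}k$ holds identically, so the Dirichlet space is one solution; the content of the theorem is that it is essentially the only one.

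For uniqueness, let $\mathcal H$ be a reproducing kernel Hilbert space of functions analytic in $\dd$ with kernel $k(z,\w)=\sum_{m,n\ge 0}a_{m,n}z^m\overline{\w}^n$, where the Hermitian matrix $(a_{m,n})$ is positive semidefinite (this double series converges since $k$ is analytic in $z$ and anti-analytic in $\w$). I would substitute this expansion into \eqref{dirich-equal} and compare coefficients of $z^p\overline{\w}^q$. The left-hand side contributes $a_{p+2,q}(p+2)(p+1)$, while the right-hand side contributes $a_{p+1,q-1}(p+1)(q-1)$ when $q\ge 2$ and nothing when $q\in\{0,1\}$. This yields the boundary relations $a_{m,0}=a_{m,1}=0$ for $m\ge 2$ together with the recursion
\[
m\,a_{m,n}=(n-1)\,a_{m-1,n-1},\qquad m,n\ge 2.
\]

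The next step is to solve this recursion. Since it links $(m,n)$ only to $(m-1,n-1)$, it acts along the diagonals $m-n=\mathrm{const}$. On the main diagonal it reads $n\,a_{n,n}=(n-1)a_{n-1,n-1}$, so $n\,a_{n,n}$ is independent of $n$ and $a_{n,n}=a_{1,1}/n$ for $n\ge 1$. Off the diagonal, iterating the recursion downward pushes the smaller index to the value $1$, where the boundary relation $a_{m,1}=0$ $(m\ge 2)$ forces the entire chain to vanish; the symmetry $a_{m,n}=\overline{a_{n,m}}$ disposes of the chains on the other side. Hence every $a_{m,n}$ with $m\neq n$ and $\min(m,n)\ge 1$ vanishes, and the only coefficients not yet pinned down are $a_{0,0}$ and the corner pair $a_{0,1}=\overline{a_{1,0}}$. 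Here positive semidefiniteness enters: the principal $2\times 2$ minor on the indices $0,1$ must be nonnegative, which—once one normalizes to the genuine Dirichlet situation, in which the constants do not belong to the space, i.e.\ $a_{0,0}=0$—forces $a_{0,1}=a_{1,0}=0$. What survives is $k(z,\w)=a_{1,1}\sum_{n\ge 1}(z\overline{\w})^n/n=-a_{1,1}\ln(1-z\overline{\w})$, the Dirichlet kernel up to the positive factor $a_{1,1}>0$.

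The main obstacle I anticipate is not the recursion, which is routine, but the careful treatment of the low-degree terms: the recursion alone leaves the constant $a_{0,0}$ and the corner entries undetermined, so uniqueness genuinely requires invoking the positive semidefiniteness of the Gram matrix $(a_{m,n})$ together with the convention that the Dirichlet space carries no constants. A secondary point to make rigorous is the passage between the abstract hypothesis (that the kernel solves \eqref{dirich-equal}) and the concrete identification of $\mathcal H$ with the Dirichlet space; once the kernel is determined up to a positive scalar, the fact that a reproducing kernel Hilbert space is uniquely determined by its kernel closes the argument.
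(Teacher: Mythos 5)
Your proposal is correct and takes essentially the same route as the paper: expand the kernel at the origin, turn \eqref{dirich-equal} into the coefficient recursion $(n+2)c_{n+2,m}=(m-1)c_{n+1,m-1}$ for $m\ge 2$ with the boundary conditions $c_{n,0}=c_{n,1}=0$ for $n\ge 2$, deduce $c_{n,n}=c_{1,1}/n$ on the diagonal, and kill the off-diagonal entries by iterating down to the boundary and invoking Hermitian symmetry. If anything, you are more careful than the paper on the degenerate low-order terms: the paper asserts $k(0,0)=0$ without comment and its case analysis never reaches the corner entries $c_{1,0}=\overline{c_{0,1}}$, whereas you observe explicitly that the recursion leaves $c_{0,0}$, $c_{0,1}$, $c_{1,0}$ undetermined (indeed $c-\ln(1-z\overline{\w})$ with $c>0$ also solves \eqref{dirich-equal}, so the normalization excluding constants is genuinely needed), impose $c_{0,0}=0$, and then eliminate the corner pair via positive semidefiniteness of the coefficient matrix --- a step the paper's own argument silently requires.
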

Note that \eqref{dirich-equal} is not an equality in the Dirichlet space, but rather, an equality between analytic functions. 
We give a similar characterization of the Fock space in Proposition \ref{proposition-fock}.\\

More generally, our analysis suggests a new direction in the study of the connections between reproducing kernel Hilbert spaces and operator models.
In particular, the following question is of interest: For which polynomials of two variables $p(x,y)$ does the equation
\[\pr_z^*=p(M_z,\pr) \]
characterize a reproducing kernel Hilbert space?

\begin{remark}
  {\rm When denoting inner products, we will sometimes mention explicitly the variable inside an inner
    product by writing $\langle f(z),g(z)\rangle$ rather than $\langle f,g\rangle$ to make the
    reading easier. See for instance equation \eqref{q1}.}
    \end{remark}  

    \begin{remark} {\rm A kernel $k(z,\w)$ analytic in $z$ and $\overline{\w}$ in a neighborhood
of $(0,0)$ (see Proposition \ref{propo-Hartog}) has a power series expansion at (0,0) of the form
\begin{equation}
  \label{knu}
k(z,\w)=\sum_{n,m=0}^\infty c_{n,m}z^n\bar{\w}^m.
\end{equation}
where
	\begin{equation}\label{eq:cnm}
		c_{n,m}=\langle z^n,\w^m\rangle^{-1}
	\end{equation}

	To ease the presentation, we associate to \eqref{knu}
    the infinite matrix $C(k)= (c_{m,n})_{n,m=0}^\infty$. Note that $C(k)$
    does not necessarily need to define a bounded operator in $\ell^2(\mathbb N_0)$.
	 For instance, for the Bergman kernel
	\[ \frac{1}{(1-z\bar{\w})^2} = 1 + 2 z\bar \w + 3 (z\bar\w)^2+\cdots, \]
	we have 
	\[
          C(k) =     
	\small
	\begin{pmatrix}
		1 &&&   \\
		& 2 && \bigzero\\
		&&    3 &     \\
		& \bigzero &&\ddots\\
	\end{pmatrix},
      \]
      which is unbounded on $\ell^2(\mathbb N_0)$.
	}
  \label{Ck}
\end{remark}

The paper consists of four sections besides the introduction. In Section 2 we review a number of definitions and results on reproducing kernel Hilbert spaces of analytic functions.
Sections 3, 4, and 5 contain proofs of Theorems \ref{thmhardy}, \ref{general}, and \ref{thmdir} respectively.

\section{Reproducing kernel Hilbert spaces}
\setcounter{equation}{0}
In this section we will briefly review the properties of reproducing kernel Hilbert spaces needed in the following sections. We first recall the definition

\begin{definition}
A reproducing kernel Hilbert space is a Hilbert space $(\mathcal H,\langle\cdot,\cdot\rangle)$ of functions defined in a non-empty set $\Omega$ such that there exists a complex-valued function $k(z,\omega)$ defined
on $\Omega\times\Omega$ and with the following properties:
	\begin{enumerate}
		\item $\forall \omega\in\Omega,\quad k_\omega: z\mapsto k(z,\omega)\in \mathcal{H}\to \mathcal{H}$
		\item $\forall f\in \mathcal H,\quad \langle f,k_\omega\rangle f(\omega).$
                \end{enumerate}
                \label{def21}
\end{definition}

The function $k(z,\w)$ is uniquely defined by the Riesz representation theorem, and is called the reproducing kernel of the space.
The reproducing kernel (kernel, for short) has a very important property: it is positive definite, that is, for all $N\in\mathbb N$, $\omega_1,\dots \omega_N\in\Omega$, and $c_1,\dots, c_N\in\mathbb C$, we have 
\[
\sum_{i,j=1}^{N}c_j\bar{c_i}k(\omega_i,\omega_j)\geq 0.
\]
In particular, it can be shown that the equation above implies that $k(z,\w)$ is Hermitian, i.e.
\begin{equation}
	\label{rtyui}
	k(z,\w)=\overline{k(\w,z)}.
\end{equation}

We refer to the book \cite{saitoh} for more information on reproducing kernel Hilbert spaces, and we recall that there is a one-to-one correspondence between positive definite functions on a given set
and reproducing kernel Hilbert spaces of functions defined on that set.
In the present work we are interested in the case where $\Omega$ is an open neighborhood of the origin, and where the kernels are analytic in $z$ and $\overline{\w}$.
The following result is a direct consequence of Hartog's theorem, and will be used in the sequel. For a different proof, see  \cite[p. 92]{donoghue}.

\begin{proposition}
	Let $\mathcal H$ be a reproducing kernel Hilbert space of functions analytic in $\Omega\subset\mathbb C$, with reproducing kernel $k(z,\w)$.
	Then the reproducing kernel is jointly analytic in $z$ and $\overline{\w}$.
	\label{propo-Hartog}
\end{proposition}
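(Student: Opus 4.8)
The plan is to deduce joint analyticity from the classical theorem of Hartogs on separately analytic functions. Writing $k_z(\cdot)=k(\cdot,z)$ for the kernel function at $z$, the reproducing property together with the Hermitian symmetry of the inner product gives
\begin{equation}
k(z,\omega)=\langle k_\omega,k_z\rangle=\overline{\langle k_z,k_\omega\rangle}=\overline{k_z(\omega)}.
\label{eq-herm}
\end{equation}
I would then introduce the conjugate set $\Omega^*=\{\,\overline{\omega}\,:\,\omega\in\Omega\,\}$ and the auxiliary function $G(z,\eta):=k(z,\overline{\eta})$ defined on the open set $\Omega\times\Omega^*\subset\mathbb{C}^2$. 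The point of this substitution is that ``analytic in $z$ and in $\overline{\omega}$'' for $k$ is literally the same statement as ``jointly holomorphic'' for $G$, so the whole proposition reduces to showing that $G$ is holomorphic on $\Omega\times\Omega^*$.

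Next I would verify that $G$ is holomorphic in each variable separately. For fixed $\eta$, the map $z\mapsto G(z,\eta)=k(z,\overline{\eta})=k_{\overline{\eta}}(z)$ is an element of $\mathcal H$ and is therefore analytic in $z$ by the standing hypothesis on the space. For fixed $z$, identity \eqref{eq-herm} gives $G(z,\eta)=\overline{k_z(\overline{\eta})}$; since $k_z\in\mathcal H$ is analytic, $\mu\mapsto k_z(\mu)$ is holomorphic, hence $\eta\mapsto k_z(\overline{\eta})$ is antiholomorphic and its conjugate $\eta\mapsto\overline{k_z(\overline{\eta})}$ is holomorphic. Thus $G$ is separately holomorphic on $\Omega\times\Omega^*$.

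Finally I would invoke Hartogs' theorem, which promotes separate holomorphy to joint holomorphy with no further hypothesis; undoing the substitution $\eta=\overline{\omega}$ then yields that $k$ is jointly analytic in $z$ and $\overline{\omega}$. The main obstacle is precisely this last step: separate holomorphy is elementary here, but passing to \emph{joint} holomorphy is the deep content, and a priori we have no continuity or boundedness of $k$ at our disposal. If one wishes to avoid the strong form of Hartogs, I would instead supply local boundedness by hand: the Cauchy--Schwarz inequality gives $|k(z,\omega)|\le\sqrt{k(z,z)}\,\sqrt{k(\omega,\omega)}$, and since every $f\in\mathcal H$ is continuous on $\Omega$, the Banach--Steinhaus theorem applied to the evaluation functionals $f\mapsto\langle f,k_z\rangle$ (whose norms are $\sqrt{k(z,z)}$) shows that $z\mapsto k(z,z)$ is bounded on compact subsets of $\Omega$. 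Separate holomorphy together with this local boundedness then suffices by Osgood's lemma, which is far more elementary than Hartogs' full theorem.
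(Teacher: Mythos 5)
Your proof is correct, and its main line---separate analyticity (in $z$ because the kernel functions $k_\omega$ lie in $\mathcal H$, in the second variable via the Hermitian symmetry $k(z,\omega)=\overline{k(\omega,z)}$) followed by an appeal to Hartogs' theorem---is exactly the paper's argument. You go beyond the paper in two useful respects. First, your substitution $G(z,\eta)=k(z,\overline{\eta})$ on $\Omega\times\Omega^*$ makes precise what the paper leaves implicit: the phrase ``analytic in $z$ and $\overline{\omega}$'' must be read as joint holomorphy of $G$ on an open subset of $\mathbb C^2$, which is the correct setting in which to invoke Hartogs. Second, your alternative ending is a genuine gain in economy of means: the bound $|k(z,\omega)|\le\sqrt{k(z,z)}\,\sqrt{k(\omega,\omega)}$ from Cauchy--Schwarz, combined with Banach--Steinhaus applied to the evaluation functionals (which are pointwise bounded on compact sets because every $f\in\mathcal H$ is analytic, hence continuous), yields local boundedness of the kernel, and then Osgood's lemma---separate holomorphy plus local boundedness implies joint holomorphy---finishes the proof. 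This bypasses the hard form of Hartogs' theorem (separate holomorphy with no boundedness or continuity hypothesis), which is the only deep ingredient in the paper's proof. Both routes are valid: the paper's is shorter because it cites Hartogs as a black box, while yours is essentially self-contained modulo the elementary Osgood lemma.
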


\begin{proof}
  Since the kernels belong to the space, we have that for every $\w\in\Omega$ the function $z\mapsto k(z,\w)$ is analytic in $\Omega$. From \eqref{rtyui} it follows that the kernel is also	analytic in $\overline{\w}$. Hartog's theorem (see \cite[p. 39]{mr1089193}) allows us to conclude that $k(z,\w)$ is jointly analytic in $z$ and $\overline{\w}$.
\end{proof}

When derivatives come into play, one then has \eqref{q2} below as the counterpart of \eqref{rtyui}:

\begin{proposition}
  \label{propo-1-2}
Under the hypothesis of the above discussion, the elements of the associated reproducing kernel Hilbert space are analytic in $\Omega$ and the following hold:
\begin{equation}
\label{q1}  
(\partial_w f)(\w)=\langle f(z),\partial_{\overline{\w}}k_{\w}(z)\rangle
\end{equation}  
and
\begin{equation}
\label{q2}
\pr_z k(z,\w_0)|_{z=z_0} = \overline{\pr_{\bar{\w}} k(\w_0,\w)|_{\w=z_0}  }.
\end{equation}  
\end{proposition}

\begin{proof}
The proof of \eqref{q1} can be found in  \cite[Theorem 9, p. 41]{MR1478165}.
We give the proof of \eqref{q2}, where as in Definition \ref{def21} and in the rest of the work, we use the notation: $k_\beta:z\mapsto k(z,\beta)$ where $\beta\in\Omega$.\smallskip

Setting $f(z)=k(z,\w_0)$ in \eqref{q1}  gives

\[
  \pr_z k(z,\w_0)|_{z=z_0} = \langle k(z,\w_0),\pr_{\bar\w}k(z,\w)|_{\w=z_0}\rangle 
\]
and so we have:
\[
\overline{\pr_z k(z,\w_0)|_{z=z_0}}=\langle \pr_{\bar\w}k(z,\w)|_{\w=z_0},k(z,\w_0)\rangle \pr_{\bar\w}k(z,\w)|_{z=\w_0,\w=z_0},
\]

and hence the result.
\end{proof}

For some special cases, the reader could also check \eqref{q2} for $k(z,\w)=f(z\bar{\w})$ or for $k(z,w)=a(z)\overline{a(w)}$, where $a(z)$ is analytic
in some open subset of the complex plane. 
In particular, for the latter example we have:
\[
  \pr_z k(z,\w_0)|_{z=z_0}=a^\prime(z_0)\overline{a(\w_0)}
\]
on the one hand, and
\[
\pr_{\bar{\w}} k(\w_0,\w)|_{\w=z_0}=a(\w_0)\overline{a^\prime(z_0)}
\]
on the other hand, and hence taking conjugates we see that \eqref{q2} holds. Since every positive definite function can be represented as
an infinite sum of functions of the form $a(z)\overline{a(w)}$ (this is Bergman's
reproducing kernel formula, see \cite{aron}), this would give another way to prove \eqref{q2}, after justifying interchange of sum and derivatives, but we preferred to give a direct proof.\\

The following is a main technical result that we will need in the proofs of the theorems.

\begin{proposition}
  \label{prop-2-4}
Let $k(z,\w)$ be positive definite and jointly analytic in $z$ and $\overline{\w}$ for $z,\w$ in an open subset $\Omega$ of the complex plane. Assume that the operator $\partial_z$ is densely defined in
the associated reproducing kernel Hilbert space $\mathcal H(k)$.
Then $\pr_z$  is closed and in particular has a densely defined adjoint $\pr^*_z$ which satisfies $\pr_z^{**}=\pr_z$.
\end{proposition}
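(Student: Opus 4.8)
The plan is to show that the densely defined operator $\pr$ is closed by exhibiting its graph as a closed subspace, and then invoke the standard theory of closed densely defined operators on Hilbert space. Recall that an operator $T$ with domain $\dm T$ dense in $\mathcal H$ is closed precisely when its graph $\{(f,Tf):f\in\dm T\}$ is closed in $\mathcal H\oplus\mathcal H$; equivalently, whenever $f_n\in\dm T$, $f_n\to f$ and $\pr f_n\to g$ in $\mathcal H$, one must have $f\in\dm T$ and $\pr f=g$. The key tool is formula \eqref{q1}, which expresses pointwise evaluation of the derivative as an inner product against a fixed element of the space.

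First I would take a sequence $f_n\in\dm\pr$ with $f_n\to f$ and $\pr f_n\to g$ in the norm of $\mathcal H(k)$, with the aim of proving $f\in\dm\pr$ and $g=\pr f$. The crucial observation is that norm convergence in a reproducing kernel Hilbert space forces pointwise convergence, since for each fixed $z_0\in\Omega$ one has $f_n(z_0)=\la f_n,k_{z_0}\ra\to\la f,k_{z_0}\ra=f(z_0)$, and similarly, using \eqref{q1}, the derivative values converge:
\[
(\pr f_n)(z_0)=\la f_n,\pr_{\overline{\w}}k_{z_0}\ra\longrightarrow\la f,\pr_{\overline{\w}}k_{z_0}\ra.
\]
On the other hand, $(\pr f_n)(z_0)\to g(z_0)$ by the same reproducing-kernel argument applied to the sequence $\pr f_n\to g$. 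Comparing the two limits gives $g(z_0)=\la f,\pr_{\overline{\w}}k_{z_0}\ra$ for every $z_0$, which by \eqref{q1} is exactly the statement that $f$ is differentiable with $(\pr f)(z_0)=g(z_0)$; hence $f\in\dm\pr$ and $\pr f=g$, proving $\pr$ closed.

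The step I expect to require the most care is the passage to the derivative values: I must justify that $f$ lies in $\dm\pr$ in the operator-theoretic sense, i.e. that $\pr f\in\mathcal H(k)$, rather than merely that $f$ has a pointwise derivative as an analytic function. This is precisely what the identity $g(z_0)=\la f,\pr_{\overline{\w}}k_{z_0}\ra=(\pr f)(z_0)$ delivers, since $g\in\mathcal H(k)$ by hypothesis and $g$ agrees pointwise with the analytic derivative of $f$; the joint analyticity from Proposition \ref{propo-Hartog} guarantees $\pr_{\overline{\w}}k_{z_0}$ is a genuine element of the space so that \eqref{q1} applies. Once $\pr$ is known to be closed and densely defined, the remaining assertions follow from the general theory: a densely defined closed operator has a densely defined adjoint $\pr^*$, and the double adjoint recovers the original operator, $\pr^{**}=\pr$. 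I would simply cite the standard functional-analytic results (see e.g. the treatment of closed operators and their adjoints in any text on unbounded operators) for this last part rather than reprove them.
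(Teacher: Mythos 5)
Your proposal is correct and follows essentially the same route as the paper's own proof: both use \eqref{q1} together with the continuity of the evaluation functionals (norm convergence implies weak, hence pointwise, convergence) to identify the limit $g$ with the pointwise derivative $f'$, and then cite the standard theory of closed, densely defined operators for the existence of $\pr^*$ and the identity $\pr^{**}=\pr$. Your explicit remark that $g\in\mathcal H(k)$ is what places $f$ in $\dm\pr$ (with the maximal domain understood) is a point the paper leaves implicit, but it is the same argument.
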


\begin{proof}
	Let $(f_n)$ be a sequence of elements in  $\dm \pr$ and let $f,g\in \mathcal H$ be such that 
	\begin{align*}
	f_n\to f\\
	\pr f_n\to g		
	\end{align*}
	where the convergence is in the norm. Since weak convergence follows from strong convergence, using \eqref{q1}, we have for every $\w\in\Omega$ that
	\[\l\langle f_n,\pr_{\bar{\w}}k_{\w}\r\rangle \to \l\langle f,\pr_{\bar{\w}}k_\w\r\rangle \quad{\rm and}\quad \l\langle \pr f_n,k_\w\r\rangle \to \l\langle g,k_\w\r\rangle ,\]
	
	where the brackets denote the inner product in $\mathcal H(k)$.	Hence it follows that 
	 \[ \lim_{n\rightarrow\infty} f_n'(\w)=f'(\w)\quad{\rm and}\quad \lim_{n\rightarrow\infty} f_n'(\w)=g(\w).\]
	
	Thus $g=f'$, and hence $\pr$ is closed. Hence, $\pr$ has a densely defined adjoint and $\pr^{**}=\pr$; see e.g. \cite[Theorem VIII.1, pp. 252-253]{MR751959}.
\end{proof}  

As an application we prove the following characterization of the Fock space.
In the statement, one could assume the functions analytic only in a neighborhood of the origin, and then use analytic continuation. 

\begin{proposition}
\label{proposition-fock}  
The Fock space is the unique (up to a positive multiplicative factor) reproducing kernel Hilbert space of entire functions where the equation
\[
\partial_z^*=M_z
\]
holds on the linear span of the kernels (in particular the kernel functions are in the domain of $\pr^*$ and of $M_z$).
\end{proposition}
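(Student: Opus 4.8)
The plan is to convert the operator identity $\pr^*=M_z$ into a first-order differential equation for the reproducing kernel, to solve that equation, and then to use the Hermitian symmetry of the kernel to reduce the remaining freedom to a single positive constant. Throughout, $\mathcal H(k)$ denotes a reproducing kernel Hilbert space of entire functions; by Proposition~\ref{propo-Hartog} its kernel $k(z,\w)$ is jointly analytic in $z$ and $\overline{\w}$.

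First I would compute $\pr^* k_\w$ in two ways. On the one hand, the hypothesis that the identity makes sense on the kernels says precisely that $k_\w\in\dm\pr^*$ and $\pr^* k_\w=M_z k_\w=z\,k(z,\w)$. On the other hand, for every $f\in\dm\pr$ the reproducing property together with \eqref{q1} yields
\[
\la f,\pr^* k_\w\ra=\la\pr f,k_\w\ra=(\pr f)(\w)=\la f,\pr_{\overline{\w}}k_\w\ra ,
\]
so that $\pr^* k_\w=\pr_{\overline{\w}}k_\w$ as an element of $\mathcal H(k)$. Equating the two expressions gives the kernel equation
\[
\pr_{\overline{\w}}k(z,\w)=z\,k(z,\w),
\]
a linear equation in the holomorphic variable $\overline{\w}$ with $z$ entering only as a parameter.

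Next I would integrate this equation. Holding $z$ fixed, the general solution of $\pr_{\overline{\w}}k=z\,k$ is $k(z,\w)=C(z)e^{z\overline{\w}}$ with $C(z)$ entire. To identify $C$ I would impose the symmetry $k(z,\w)=\overline{k(\w,z)}$ coming from \eqref{q2}: substituting the candidate into both sides gives $C(z)e^{z\overline{\w}}=\overline{C(\w)}\,e^{z\overline{\w}}$, hence $C(z)=\overline{C(\w)}$ for all $z,\w$. This forces $C$ to be a constant equal to its own conjugate, i.e. a real number, and positive definiteness of $k$ (note $k(\w,\w)=C\,e^{|\w|^2}$) rules out nonpositive values. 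Therefore $k(z,\w)=C\,e^{z\overline{\w}}$ with $C>0$, which is the Fock kernel up to the positive factor $C$; the direct check $\pr_{\overline{\w}}\!\left(Ce^{z\overline{\w}}\right)=z\cdot Ce^{z\overline{\w}}$ confirms conversely that this space does satisfy $\pr^*=M_z$, so the characterization is exact.

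The step I expect to require the most care is the identification $\pr^* k_\w=\pr_{\overline{\w}}k_\w$: one has to be sure that the linear functional $f\mapsto(\pr f)(\w)$ is indeed represented by $\pr_{\overline{\w}}k_\w$, which is what \eqref{q1} supplies, and that the hypothesis genuinely places $k_\w$ in $\dm\pr^*$. Once these domain issues are settled the remainder is an elementary ordinary differential equation together with a symmetry argument, so the difficulty here is essentially bookkeeping rather than analysis.
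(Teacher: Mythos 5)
Your proof is correct and follows essentially the same route as the paper's: both reduce the hypothesis $\pr^*=M_z$ to the kernel equation $\pr_{\overline{\w}}k=z\,k$, solve it as an ODE in $\overline{\w}$ to get $k(z,\w)=C(z)e^{z\overline{\w}}$, and then use the Hermitian symmetry and positivity of the kernel to force $C$ to be a positive constant. The only (cosmetic) difference is in deriving the kernel equation: you identify $\pr^*k_\w=\pr_{\overline{\w}}k_\w$ by pairing against arbitrary $f\in\dm\pr$ and invoking density, whereas the paper pairs $\pr^*k_\w$ against a second kernel $k_\nu$ and uses \eqref{q2}; both are legitimate and lead to the identical computation.
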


\begin{proof} Let $k(z,w)$ be the reproducing kernel of the space in the proposition. We want to show that $k(z,w)=ce^{z\overline{\w}}$ for some $c>0$.
	From Proposition \ref{propo-Hartog} the kernel is jointly analytic in $\mathbb D$. Since $\pr^*=M_z$, it follows that 
	\[\langle \pr_z^*k(z,\w), k(z,\nu)\rangle = \langle M_zk(z,\w),k(z,\nu)\rangle . \]
	
	Evaluating each side yields the following: For the right hand side we get
	\[ \langle M_zk(z,\w),k(z,\nu)\rangle = \nu k(\nu,\w) \]
	
	since $M_zk(z,\w)=zk(z,\w)$. The left hand side yields
\begin{equation}
		\label{eq:12345}
	\begin{aligned}
		\langle \pr_z^* k(z,\w),k(z,\nu)\rangle &= \langle k(z,\w),\pr_zk(z,\nu)\rangle \\
		&= \overline{\langle \pr_zk(z,\nu),k(z,\w)\rangle }\\
		&= \overline{\pr_z k(z,\nu)|_{z=\w}}\\
		&= \overline{\pr_\w k(\w,\nu)}\\
		&= \pr_{\bar{\w}}k(\nu,\w),
	\end{aligned}
\end{equation}
where we have used \eqref{q2} to go from the penultimate line to the last one.
Thus we obtain that $\pr_{\bar{\w}}k(\nu,\w)=\nu k(\nu,\w)$, which is a differential
equation with the solution
\[ k(\nu,\w)=c(\nu)e^{\nu\bar{\w}},
\]
where the function $c(\nu)$ is an entire function of $\nu$ (since $k(\nu,\w)$ and $e^{\nu\overline{\w}}$ are entire functions of $\nu$).
But $k(\nu,\w)=\overline{k(\w,\nu)}$. Hence $c(\nu)=\overline{c(\nu)}$ so that $c(\nu)$ is real valued. Using the Cauchy-Riemann equations, we see that $c(\nu)$ is
a constant, which is furthermore positive since the kernel is positive. 
\end{proof}

\begin{remark} The Fock space can be described in a geometric way by the Gaussian weight as in \eqref{gauss123}. The Gaussian weight has other characterizations. We mention in particular the one from information theory: the Gaussian distribution $\frac{1}{\sqrt{2\pi}}e^{-\frac{x^2}{2}}$
  maximizes the entropy
\[
  -\int_{\mathbb R}f(x)\ln f(x)dx
\]
among all probability distributions with zero mean and second moment equal to $1$;
  see e.g. \cite[Exercise 4, p. 50]{MR2363070} and \cite[Theorem 8.3.3, p. 240]{ash_book}. It can also be characterized (after normalization) as the unique continuous radial weight function $\w(z)=\frac{1}{\pi}e^{-|z|^2}$ such that for polynomial $p$ and $q$ under the inner product 
  $$ \langle p,q\rangle = \frac{1}{\pi}\iint_{\mathbb C}p(z)\bar q(z)\w(z)dA(z), $$
  the operator of multiplication and differentiation are adjoint to each other; see \cite{MR0157250} (and J. Tung's thesis \cite{Tung}). 
 \end{remark}

\section{Proof of Theorem \ref{thmhardy}}
\setcounter{equation}{0}
We first check that the kernel $k_\w(z)=\frac{1}{1-z\overline{\w}}$ is a solution of \eqref{hardy-equal}, i.e.
\[
  \left\langle \pr_zg,k(z,\omega)\right\rangle = \left\langle g, \pr_z^*k(z,\omega) \right\rangle=  \left\langle g, M_z\pr_zM_zk(z,\omega)\right\rangle,
\]
with $g(z)=\frac{1}{1-z\w^*}$.
To verify the above, we compute the left side of the equation and have
\begin{align*}
	\langle \pr_zk_\nu(z),k_\w(z)\rangle &= \l\langle \pr_z\l( \frac{1}{1-z\bar{\nu}} \r),k_\w(z)\r\rangle  =\l\langle \frac{\nu}{(1-z\nu)^2} , k_\w(z) \r\rangle =\frac{\bar{\nu}}{(1-\w\nu)^2}.
\end{align*}
Similarly, we independently calculate the right hand side as
\begin{align*}
	\langle k_{\bar{\nu}}(z),M_z\pr_zM_zk_\w\rangle &= \l\langle k_{\bar{\nu}}(z),  M_z\pr_z \l( \frac{z}{1-z\bar\w} \r)\r\rangle \\
	&= \l\langle k_{\bar{\nu}}(z),\frac{z}{(1-z\bar\w)^2}\r\rangle \\
	&= \overline{ \l\langle \frac{z}{(1-z\bar\w)^2},k_\nu(z) \r\rangle }\\
	&= \frac{\bar\nu}{(1-\w\bar\nu)^2},
\end{align*}
which comes to be the same as the left hand side.\\

To prove the converse we apply \eqref{hardy-equal} to kernels,
then we
use analyticity to find the kernel via its Taylor expansion at the origin. Let $\w,\nu\in \mathbb D$. From \eqref{hardy-equal} we get
\begin{equation}\label{eq1.2}
  \langle \partial_z k_\w,k_\nu\rangle=\langle k_\w,\pr_z^*k_\nu\rangle \langle 
k_\w,M_z\partial_z M_zk_\nu\rangle.
\end{equation}

We rewrite \eqref{hardy-equal} as 
	\begin{align*}
	\partial_z^*f=z(\partial_z zf)&=z(zf'+f)=z^2f'+zf.
	\end{align*}

By hypothesis the kernel functions belong to the domain of $\partial_z^*$ and we have $\partial_z^{**}=\partial_z$ by Proposition \ref{prop-2-4}.
Therefore,
      By by \eqref{eq:12345} we obtain
\begin{equation}
\label{3=4=5}
\langle \pr_z^* k_\w,k_\nu \rangle = (\pr_{\bar{\w}} k)(\nu,\w).
\end{equation}
Then, using the two end sides of \eqref{eq1.2}, we get
\begin{align*}
	\langle M_z\pr_z M_z k_\omega(z), k_\nu(z)\rangle 
 & = \overline{	\langle k_\omega(z),M_z\pr_z M_z k_\nu(z)\rangle }\\
 &=\overline{\langle k (z,\omega),z^2\pr_z k(z,\nu) + z k(z,\nu)\rangle }\\
  &=\overline{\langle k (z,\omega),z^2\pr_z k(z,\nu)\rangle }  + \overline{\langle k (z,\omega) ,  z k(z,\nu)\rangle }\\
  & = \bar{\w}^2\partial_{\bar\w}k(\nu,\w)   +\bar{\w} k(\nu,\w)
\end{align*}
where we have used \eqref{q2} to go from the penultimate line to the last one.
Considering $k=k(z,\w)$, we get the partial differential equation 
$$\pr_{\bar{\omega}} k =  z^2\pr_{z} k +z k,$$
where $k=k(z,\omega)$, or equivalently by replacing the role of $z$ and $\w$ we obtain
\begin{equation}\label{eq1.3}
\pr_z k =\bar{\omega}^2 \pr_{\bar{\omega}} k + \bar{\omega} k.
\end{equation}

The kernel is analytic in $z$ and $\overline{\w}$ near the origin, and hence can be written as
\eqref{knu}. So we can rewrite \eqref{eq1.3} as
\begin{align*}
	\sum_{n=1}^{\infty}\sum_{m=0}^{\infty} n c_{n,m}z^{n-1}\bar{\omega}^m=  \sum_{n=0}^{\infty}\sum_{m=1}^{\infty}mc_{n,m}z^n\bar{\omega}^{m+1}+ \sum_{n=0}^{\infty}\sum_{m=0}^{\infty}c_{n,m}z^n\bar{\omega}^{m+1},
\end{align*}
which can also be written as:
\begin{align*}
	&\sum_{n=0}^{\infty}(n+1)c_{n+1,0}z^n
	+\sum_{n=0}^{\infty}(n+1)c_{n+1,1}z^n\overline{\omega}
	+\sum_{n=0}^{\infty}\sum_{m=2}^{\infty} (n+1) c_{n+1,m}z^{n}\bar{\omega}^m\\
	&= \sum_{n=0}^{\infty} c_{n,0}z^n\overline{\omega}
	+\sum_{n=0}^{\infty}\sum_{m=2}^{\infty} mc_{n,m-1}z^{n}\overline{\w}^{m}.\\
\end{align*}

Now we compare the terms on two sides. First we look at the part which is constant with respect to $\w$ and get
$\ds\sum_{n=0}^{\infty}(n+1)c_{n+1,0} z^n=0$. Hence 
\begin{equation}
  c_{n+1,0}=0,
  \label{cn1}
  \end{equation}
for all $n\in \mathbb N_0$.

Consider the  coefficients of $z^n\overline{\w}$ on both sides. Then we have $\ds\sum_{n=0}^{\infty}(n+1)c_{n+1,1} z^n\overline{\w}=\sum_{n=0}^{\infty} c_{n,0}z^n\overline{\omega} $. 
Hence 
\begin{equation}\label{eq:1234567}
	(n+1)c_{n+1,1}=c_{n,0},
\end{equation}
for all $n\in\mathbb N_0$. Note that for $n=0$ we get $c_{0,0}=c_{1,1}$.

Consider the terms $z^n\overline{\w}^m$, $m\geq2$. Then $\ds 	\sum_{n=0}^{\infty}\sum_{m=2}^{\infty} mc_{n,m-1}z^{n}\overline{\w}^{m} = \sum_{n=0}^{\infty}\sum_{m=2}^{\infty} (n+1) c_{n+1,m}z^{n}\bar{\omega}^m$.
Hence 
\begin{align}\label{eq1.6}
	mc_{n,m-1}  =(n+1)c_{n+1,m},
\end{align}
for all $n\in \mathbb{N}_0$ and $m=2,3,...$. Note if $m=n+1$, then $(n+1)c_{n+1,n+1}=(n+1)c_{n,n}$. So 
\begin{equation}
	\label{hardy678}
	c_{0,0}=c_{1,1}=c_{2,2}=\dots .
\end{equation}

We now check that $c_{n,m}=0$ when $n\neq m$. For $0<m< n+1$ using \eqref{eq:1234567} and \eqref{eq1.6} it follows that
\[c_{n+1,m}=\alpha_{n,m} c_{n+1-m,0},\]
where $\alpha_{n,m} = \frac{m}{n+1}\frac{m-1}{n}\cdots\frac{1}{n+2-m}\neq0$, then $c_{n+1,m}=0$ by \eqref{cn1} for $n+1>m$. The case $m>n$ is obtained by symmetry.\\

Hence, all off-diagonal entries of the matrix $C(k)$ (defined in Remark
\ref{Ck} will be zero, and it follows from \eqref{hardy678} that
$k(z,\w)=\frac{c_{0,0}}{1-z\overline{\w}}$. This ends the proof of the theorem.\qed\\

If we assume that the powers of $z$ are in the domain of $\partial^*$ and of $M_z$ one has a simpler proof for the characterization given in Theorem \ref{thmhardy} of the Hardy space,
close in spirit to Bargmann's arguments. We note that conditions $(1)$-$(4)$ in the statement of the next result are satisfied by $\mathbf H^2$,

\begin{proposition}\label{propo123}
	Let $\mathcal H$ be a reproducing kernel Hilbert space of functions analytic in a neighborhood of the origin and such that: 
	\begin{enumerate}
		\item $M_z$ bounded,
		\item $\{z^n\}_{n=0}^\infty\subset \dm \pr$,
		\item $\dm \pr \subset \dm \pr^*$,
		\item $\pr^*=M_z\pr M_z$.
	\end{enumerate}
	Then $\mathcal H=\mathbf H^2$.
\end{proposition}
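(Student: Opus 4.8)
The plan is to bypass the partial differential equation for the kernel and instead read off the inner products of the monomials directly, in the spirit of the Fock computation in Proposition \ref{proposition-fock}. Write $a_{m,n}=\la z^m,z^n\ra$ for the Gram matrix of the monomials, which are genuine elements of $\mathcal H$ by hypotheses (1) and (2). First I would record the action of the two operators on $z^n$. Since $\pr z^n=nz^{n-1}$ and $M_z$ has full domain by (1), hypothesis (4) applied to $z^n$ (which lies in $\dm\pr^*$ by (3)) gives
\[
\pr^* z^n = M_z\pr M_z z^n = M_z\pr z^{n+1}=(n+1)M_z z^n=(n+1)z^{n+1}.
\]

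Next, for all $m,n\geq 0$ I would pair $\pr z^m$ against $z^n$ and move $\pr$ across using the adjoint identity $\la\pr z^m,z^n\ra=\la z^m,\pr^*z^n\ra$, which is legitimate precisely because $z^m\in\dm\pr$ and $z^n\in\dm\pr\subset\dm\pr^*$. Substituting the two formulas above (and using that $n+1$ is real) produces a single recurrence for the Gram entries,
\[
m\,a_{m-1,n}=(n+1)\,a_{m,n+1},\qquad m,n\geq 0,
\]
equivalently $a_{m,n}=\tfrac{m}{n}\,a_{m-1,n-1}$ for $n\geq 1$.

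From here the argument is purely combinatorial. Taking $m=0$ makes the left-hand side vanish (as $\pr z^0=0$), forcing $a_{0,k}=0$ for all $k\geq 1$; Hermitian symmetry $a_{m,n}=\overline{a_{n,m}}$ then also yields $a_{k,0}=0$ for $k\geq 1$. Iterating $a_{m,n}=\tfrac{m}{n}a_{m-1,n-1}$ down a diagonal carries any off-diagonal entry to a \emph{nonzero} multiple of some $a_{d,0}$ or $a_{0,d}$ with $d=|m-n|\geq 1$, hence every off-diagonal entry vanishes; on the main diagonal the same relation collapses to $a_{n+1,n+1}=a_{n,n}$, so all diagonal entries equal $a_{0,0}=\|1\|^2=:c>0$. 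Thus $\la z^m,z^n\ra=c\,\delta_{m,n}$, i.e. $\{z^n\}$ is orthogonal with common norm $\sqrt c$, which is exactly the Hardy inner product up to the factor $c$.

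The one point needing genuine care — and the step I expect to be the main obstacle — is completeness: one must know that $\{z^n\}$ not only is orthogonal (after scaling) but actually spans a dense subspace of $\mathcal H$, for otherwise $\mathcal H$ would merely \emph{contain} a scaled copy of $\mathcal H^2$ rather than equal it. I would settle this through the reproducing kernel. The closed span $\mathcal M=\overline{\mathrm{span}}\{z^n\}$ is itself a reproducing kernel Hilbert space, with kernel $\tfrac1c\sum_n z^n\bar\w^n=\tfrac{1}{c(1-z\bar\w)}$, and since the kernels of $\mathcal H$ are dense it suffices to show each $k_\w\in\mathcal M$. Using $\la z^n,k_\w\ra=\w^n$ together with the orthogonality just obtained, the Taylor expansion of $k_\w$ at the origin is forced to be $\sum_n\tfrac{\bar\w^n}{c}z^n$; the real work is to promote this local (pointwise) expansion to convergence in the norm of $\mathcal H$, after which $k_\w\in\mathcal M$ and hence $\mathcal M=\mathcal H$. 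Once completeness is in hand, uniqueness of a reproducing kernel Hilbert space from its kernel identifies $\mathcal H$ with $\mathcal H^2$ up to the scalar $c$, completing the proof.
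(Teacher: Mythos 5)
Your Gram--matrix computation is exactly the paper's proof of this proposition: the paper pairs $f=z^n$ with $g=z^m$, uses $\la f,\pr g\ra=\la \pr^* f,g\ra$ to obtain $(n+1)\la z^{n+1},z^m\ra=m\la z^n,z^{m-1}\ra$, deduces equality of the diagonal entries by taking $m=n+1$, and kills the off-diagonal entries by pairing against $g=1$; your case $m=0$ (using $\pr z^0=0$) is the same step up to conjugate symmetry. So the core of your proposal coincides with the paper's argument.

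Where you go beyond the paper is the completeness step, and here there is a genuine gap --- one you partly acknowledge, but it is worse than ``real work left to do'': as sketched, the argument is circular. The series $\tfrac{1}{c}\sum_n \bar\w^n z^n$ is norm convergent for $|\w|<1$ with no further work (its coefficients are square-summable against the orthogonal system), and it equals the orthogonal projection $P_{\mathcal M}k_\w$ of $k_\w$ onto $\mathcal M=\overline{{\rm span}}\{z^n\}$. The issue is not convergence but whether the remainder $h_\w:=k_\w-P_{\mathcal M}k_\w$ vanishes. Your claim that orthogonality ``forces'' the Taylor expansion of $k_\w$ to be $\sum_n\tfrac{\bar\w^n}{c}z^n$ presupposes what you want to prove: the Taylor coefficients of an element of $\mathcal H$ are recovered by inner products against the $z^n$ only when that element already lies in $\mathcal M$. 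For the remainder itself, orthogonality to $\mathcal M$ yields only the identity $\|h_\w\|^2=h_\w(\w)$, which is not an obvious contradiction. To close the gap one must use hypothesis (4) as a full operator identity rather than only its action on monomials --- for instance, show that the kernels lie in $\dm \pr^*$ and derive the differential equation $\pr_\nu k=\bar\w^2\pr_{\bar\w}k+\bar\w k$ satisfied by the kernel, as in the paper's proof of Theorem \ref{thmhardy}; that argument pins down $k$ itself, and hence the space, rather than merely the Gram matrix of the monomials. In fairness, the paper's own proof of Proposition \ref{propo123} also stops at the Gram matrix and is silent on density of the polynomials; your instinct that this is the main obstacle is correct, but your proposed resolution does not overcome it.
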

\begin{proof}
	Let the kernel $K$ of $\mathcal{H}$ have the form in \eqref{knu}.
	From Proposition \ref{propo-Hartog} the kernel is jointly analytic in $\mathbb D$.
  Take $f(z)=z^n$ and $g(z)=z^m$, then
	\begin{align*}
	\langle f ,\pr g\rangle &= \langle z^n,mz^{m-1}\rangle &	\langle \pr^* f,g\rangle &=\langle z^2 f'+zf,g\rangle \\
	&=m\langle z^n,z^{m-1}\rangle & 	&= \langle n z^{n+1}+z^{n+1},z^m\rangle \\
	& & 	&=(n+1) \langle  z^{n+1},z^m\rangle .
	\end{align*}
	Since $	\langle f,\pr g\rangle \langle \pr^* f,g\rangle $, we obtain
	\begin{equation}
	(n+1)\langle z^{n+1},z^m\rangle m \langle z^n,z^{m-1}\rangle .
	\label{prop-eq-1}
	\end{equation}
	For $m=n+1$, we have 
	\begin{align*}
	(n+1)\langle z^n,z^n\rangle = (n+1)\langle z^{n+1},z^{n+1}\rangle \implies \langle z^n,z^n\rangle = \langle z^{n+1},z^{n+1} \rangle ,
	\end{align*}
	thus the diagonal entries are nonzero.
	Now we are left to show that if $n\neq m$, $\langle z^n,z^m\rangle =0$. From \eqref{prop-eq-1} we get 
	\begin{equation}
			\langle z^{n+1},z^m\rangle = \frac{m}{n+1}\langle z^n,z^{m-1}\rangle .
			\label{prop-eq-2}
	\end{equation}
		
	Take $f(z)=z^n,n\neq 0$, and $g(z)\equiv1$; then
	\begin{align*}
	\langle f, \pr g \rangle =\langle \pr^* f ,g \rangle &= \langle z^2f'+zf,g\rangle \\
	&=\langle n z^{n+1}+z^{n+1},1\rangle \\
	&=(n+1)\langle z^{n+1},1\rangle .
	\end{align*}
	However $\langle f, \pr g \rangle = 0$, hence $\langle z^{n+1},1\rangle 0$, which also gives $\langle 1,z^{m+1}\rangle 0$. Then from \eqref{eq:cnm} and \eqref{prop-eq-2} all the off-diagonal coefficients $c_{n,m}$ are equal to $0$.
\end{proof}

More generally, with the same hypothesis as in Proposition \ref{propo123}, one could replace $M_z\partial_z$ by a (possibly unbounded) diagonal operator defined as follows:
\[
D(z^n)=\alpha_n z^n,\quad n=0,1,2,\ldots,
\]
with $\alpha_n>0$ for $n\ge 1$ and $\alpha_0$ arbitrary. Such $D$ is called a radial differential operator in the literature. Then we get
\[
\langle z^n,z^m\rangle=\delta_{n,m}\frac{n!}{\alpha_n\cdots \alpha_1}\langle 1,1\rangle.
\]  
Taking $\beta^{-1}=\langle 1,1\rangle $, and using \eqref{eq:cnm}, the reproducing kernel is given by
\[
k(z,\w)=\beta\sum_{n=0}^\infty\frac{\alpha_n\cdots \alpha_1}{n!}z^n\overline{\w}^n
\]
by \eqref{eq:cnm}, provided the radius of convergence of the above series is strictly positive.

\section{Proof of Theorem \ref{general}}
\setcounter{equation}{0}

To prove Theorem \ref{general}, we use the same strategy as in the previous section. The kernel $\frac{1}{(1-z\overline{\w})^\alpha}$ is a solution of $\pr^*=M_z\partial_z M_z -(1-\alpha)M_z$, this applied to this kernel gives us 
\begin{align*}
	\partial^* k(z,\w) &=\left(M_z\partial_z M_z -(1-\alpha)M_z\right) \left(\frac{1}{(1-z\bar{\w})^{\alpha}}\right)\\
	&=\frac{z}{(1-z\bar\omega)^\alpha}+\alpha \frac{z^2\bar\omega}{(1-z\bar\omega)^{\alpha+1}}-(1-\alpha)\frac{z}{(1-z\bar\omega)^\alpha}\\
	&=\frac{\alpha z}{(1-z\bar\omega)^{\alpha+1}}\\
	&=\partial_{\bar{\w}}\left(\frac{1}{(1-z\bar{\w})^\alpha}\right)\\
	&=\partial_{\bar{\w}}k(z,\w).
\end{align*}
which implies 
\[ \langle \partial^*k_\nu(z), k_\w(z)\rangle = \langle k_\nu(z),  \partial_{\bar\w} k_\w(z) \rangle .  \]
Additionally, we get the relation
$
	z(1-z\bar\omega) + \alpha z^2\bar\omega -(1-\alpha) z (1-z\bar\omega)=\alpha z.
$\\

As we see again, indeed for $\alpha=1$ we have the Hardy case. To prove the converse we apply \eqref{general-eq} to kernels, and find a partial differential equation satisfied by the reproducing kernel. Then we
use analyticity to find the kernel via its Taylor expansion at the origin. Let $\w,\nu\in \mathbb D$, then from \eqref{general-eq} we get
\begin{equation}\label{general_one_side}
\langle \partial k_\w,k_\nu\rangle=\langle k_\w,\pr^*k_\nu\rangle \langle 
k_\w,M_z\partial M_zk_\nu +(\alpha-1)M_zk_\nu\rangle.
\end{equation}

We rewrite \eqref{general-eq} as 
\begin{equation}
\begin{split}
\partial^*f=z(\partial zf)+(\alpha-1)zf&=z^2f'+zf+ \alpha z f - zf\\
&=z^2f'+  \alpha zf .
\end{split}
\label{general-eq-v2}
\end{equation}

From the calculation above similar to \eqref{eq:12345}, it follows that 
$\langle \pr_z k(z,w),k(z,\nu) \rangle = \pr_z k(\nu,\w)$, thus from \eqref{general-eq-v2} and the two end sides of \eqref{general_one_side}.
Equation 
\eqref{eq:12345}

still holds here (it is a general computation valid for kernels analytic in $z$ and $\w$) and we have
\begin{align*}
\partial_z k(\nu,\w)&=\pr_z k(z,\w)|_{z=\nu}\\
&=\langle \pr_zk_\w,k_\nu \rangle \\
&=\langle k_\w,\pr_z^*k_\nu \rangle \\
&= \langle k_\omega,M_z\partial M_zk_\nu -(\alpha-1)M_zk_\nu\rangle \\
&=\langle k_\omega,\nu^2\pr_z k_\nu+\a\nu k_\nu\rangle \\
&=\overline{\langle \nu^2 \pr k_\nu +  \a\nu k_\nu , k_\omega\rangle  }\\
&=\bar{\omega}^2\pr k(\nu,\omega) +  \a\bar{\omega} k(\nu,\omega).
\end{align*}

Thus we get the partial differential equation
\begin{equation}\label{general-diff-eq}
\pr_z k =\bar{\omega}^2 \pr_{\bar{\omega}} k +\alpha \bar{\omega} k.
\end{equation}

The kernel is analytic in $z$ and $\overline{\w}$ near the origin, and hence can be written as
\[
k(\nu,w)=\sum_{n,m=0}^\infty c_{n,m}\nu^n\bar{\w}^m.
\]

So we can rewrite \eqref{general-diff-eq} as
	\begin{align*}
	\sum_{n=1}^{\infty}\sum_{m=0}^{\infty} n c_{n,m}\nu^{n-1}\bar{\omega}^m=  \sum_{n=0}^{\infty}\sum_{m=1}^{\infty}mc_{n,m}\nu^n\bar{\omega}^{m+1}+\alpha \sum_{n=0}^{\infty}\sum_{m=0}^{\infty}c_{n,m}\nu^n\bar{\omega}^{m+1},
	\end{align*}
	which can also be written as:
\begin{align*}
	&\sum_{n=0}^{\infty} (n+1)c_{n+1,0}\nu^{n}  
	+\sum_{n=0}^{\infty} (n+1)c_{n+1,1}\nu^{n}\bar{\omega} 
	+\sum_{n=0}^{\infty}\sum_{m=2}^{\infty} (n+1)c_{n+1,m}\nu^{n}\bar{\omega}^m  \\
	&=\sum_{n=0}^{\infty}\alpha c_{n,0}\nu^n\bar{\omega}
	+\sum_{n=0}^{\infty}\sum_{m=2}^{\infty}(\alpha +(m-1))c_{n,m-1}\nu^n\bar{\omega}^{m}  .
\end{align*}

Now we can consider the following cases: First we compare the coefficients for the terms with constant $\bar\w$. Then we have: $\ds\sum_{n=0}^{\infty}(n+1)c_{n+1,0}\nu^n=0$. 
Hence 
\[c_{n+1,0}=0\]
for all $n\in\mathbb N_0$.\\

Consider the coefficients of $\nu^n\overline{\w}$. Then we have:$\ds\sum_{n=0}^{\infty}(n+1)c_{n+1,1}\nu^n\bar{\omega}=\sum_{n=0}^{\infty}\alpha c_{n,0}\nu^n\overline{\w}$.
Hence 
\[(n+1)c_{n+1,1}=\alpha c_{n,0},\]
for all $n\in \mathbb N_0$. Note that for $n=0$ we get $c_{0,0}=\alpha c_{1,1}$.\\

Consider the terms $\nu^n\overline{\w}^m$, $m\geq2$; then we have
$$\ds \sum_{n=0}^{\infty}\sum_{m=2}^{\infty} (n+1)c_{n+1,m}\nu^{n}\bar{\omega}^m  = \sum_{n=0}^{\infty}\sum_{m=2}^{\infty}(\alpha +(m-1))c_{n,m-1}\nu^n\bar{\omega}^{m}.$$
Hence
\begin{align}\label{general-eq-2}
(n+1)c_{n+1,m}=(m+\alpha-1) c_{n,m-1},
\end{align}
for all $n\in \mathbb N_0$. Note that if $m=n+1$, then $(n+1)c_{n+1,n+1}= (n+\alpha)c_{n,n}$. So
\[ c_{n,n}=\left(\frac{n+1}{n+\alpha}\right)c_{n+1,n+1}.\]
we see that the diagonal entries are equal (up to a constant) to the Taylor coefficients in  \eqref{eq:alpha_expension}.\smallskip

We now check that $c_{n,m}=0$ when $n\neq m$. For $0\leq m \leq n+1$, it follows from \eqref{general-eq-2} that 
\[c_{n+1,m}=\phi_{\alpha,n,m} c_{n+1-m,0},\]
for $\phi_{\alpha,n,m} = \frac{m+\alpha-1}{n+1}\frac{m+\alpha-2}{n}\cdots\frac{\alpha}{n+2-m} \neq0$, and hence the conclusion using \eqref{cn1}. The case $m>n$ follows by symmetry.
Hence from these cases and by symmetry, all off-diagonal entries of $C(k)$ will be zero, and this completes the proof. \qed

\section{Proof of Theorem \ref{thmdir}}
\setcounter{equation}{0}

While with similar spirit in proof structure, unlike in proofs for Theorems \ref{thmhardy} and \ref{fock}, we prove \eqref{dirich-equal} for the kernel pointwise for $z,\w\in\mathbb D$.
Let $k(\nu,\w)$ be a solution of \eqref{dirich-equal}, with power series expansion
\[
k(\nu,\w)=\sum_{n=0}^{\infty}\sum_{m=0}^\infty c_{n,m}\nu^n\bar{\w}^m.
\]
Since $k(0,0)=0$ by hypothesis, we have  $c_{0,0}=0$ (without the condition $k(0,0)=0$ any constant function is a solution of \eqref{dirich-equal}). We have
\begin{align*}
	\pr_\nu^2 k &= \sum_{n=2}^{\infty} \sum_{m=0}^{\infty}  c_{n,m} n(n-1)\nu^{n-2}\bar\omega^{m}\\ 
	\bar\omega^2 \pr_\nu \pr_{\bar \omega} k & = \sum_{n=1}^{\infty}\sum_{m=1}^{\infty}c_{n,m} n m \nu^{n-1}\bar\omega^{m+1}.
\end{align*}
So we can rewrite \eqref{dirich-equal} in terms of the power series expansion of kernel as:
\begin{equation}
	\sum_{n=2}^{\infty} \sum_{m=0}^{\infty}  c_{n,m} n(n-1)\nu^{n-2}\bar\omega^{m}  = \sum_{n=1}^{\infty}\sum_{m=1}^{\infty}c_{n,m} n m \nu^{n-1}\bar\omega^{m+1},
	\label{diri-eq-kernel}
\end{equation}

which is equivalent to 
\begin{equation}
	\sum_{n=2}^{\infty} \sum_{m=0}^{\infty}  c_{n,m} n(n-1)\nu^{n-2}\bar\omega^{m}  = 	
	\sum_{m=1}^{\infty}c_{1,m} m \bar\omega^{m+1}
	+\sum_{n=2}^{\infty}\sum_{m=1}^{\infty}c_{n,m} n m \nu^{n-1}\bar\omega^{m+1}.
	\label{diri-eq-kernel1}
\end{equation}

Comparing on both sides the part independent of $\nu$ we get
\begin{equation}
  \label{345}
  \sum_{m=1}^{\infty} c_{1,m}m \bar\omega^{m+1}=0,
\end{equation}
as we have no corresponding terms on the left side.\smallskip

Let $n=2$. Then
\begin{equation}
	\sum_{m=0}^{\infty}  c_{2,m} 2 \bar\omega^{m}  = \sum_{m=1}^{\infty}c_{2,m} 2 m \nu \bar\omega^{m+1}.
	\label{diri-eq-n=2}
\end{equation}

We make the change of index $M=m+1$ in \eqref{345}, and obtain
\begin{equation}
	\sum_{M=2}^{\infty} c_{1,m-1}(M-1) \bar\omega^{M}=0.
	\label{diri-eq-n=1}
\end{equation}
From equations \eqref{diri-eq-n=1} and \eqref{diri-eq-n=2}, it follows now that 
\[c_{2,0}=c_{2,1}=0\quad {\rm and }\quad 2c_{2,M}=(M-1)c_{1,M-1}~~{\rm for}~~ M>2.\]

Considering equation \eqref{diri-eq-kernel} and making the change of index $N=n-2$, $M=m$ to the right side, and $N=n-1$, $M=m+1$ to the left side, we get
\begin{equation}
	\sum_{N=0}^{\infty} \sum_{M=0}^{\infty}  c_{N+2,M} (N+2)(N+1)\nu^{N}\bar\omega^{M}  = \sum_{N=0}^{\infty}\sum_{M=2}^{\infty}c_{N+1,M-1} (N+1)(M-1) \nu^{N}\bar\omega^{M}.
	\label{diri-eq-kernel-new}
\end{equation}

From \eqref{diri-eq-kernel-new} for $N\in \mathbb N_{0}$ and $M\geq 2$, we have 
\begin{equation}
\label{789}
c_{N+2,M} (N+2) = (M-1)c_{N+1,M-1}.
\end{equation}

\

We now check that all off diagonal entries of $C(k)$ are indeed zero. Let $M=0$; then from \eqref{diri-eq-kernel} with the change of variable $N=n-2$ gives us
\[\sum_{N=0}^{\infty} c_{N+2,0} (N+2)(N+1) \nu^N=0,\]
so we have
\[ c_{N+2,0}=0 \quad{\rm for} \quad N\geq 0 .\]
Let $M=1$; then from \eqref{diri-eq-kernel-new} we get
\[ c_{N+2,1}=0\quad{\rm for}\quad N\geq 0. \]
Hence all off diagonal entries of $C(k)$ are zero. Since $k(0,0)=0$ we get that $c_{0,0}=0$. Finally
we set  $M=N+2$ in \eqref{diri-eq-kernel-new}, and get
\begin{equation}
  \label{werty}
  c_{N+2,N+2}(N+2)=(N+1)c_{N+1,N+1}, \quad N=0,1,\ldots
  \end{equation}
From \eqref{werty} we get $c_{N,N}=\frac{1}{N}$ for $N\ge 1$, and the proof is complete. \qed\mbox{}\\

{\bf Acknowledgment:}
I would like to thank the referee for his/her time and comments that helped improve the paper.

\bibliographystyle{plain}
\def\cprime{$'$} \def\cprime{$'$} \def\cprime{$'$}
  \def\lfhook#1{\setbox0=\hbox{#1}{\ooalign{\hidewidth
  \lower1.5ex\hbox{'}\hidewidth\crcr\unhbox0}}} \def\cprime{$'$}
  \def\cprime{$'$} \def\cprime{$'$} \def\cprime{$'$} \def\cprime{$'$}
  \def\cprime{$'$}

\end{document}